\theoremstyle{plain}
\newtheorem{theorem}{Theorem}[section]
\newtheorem{corollary}[theorem]{Corollary}
\newtheorem{proposition}[theorem]{Proposition}
\newcommand{\NC}{{\texttt{NC}}}
\newcommand{\mP}{{\mathcal P}}
\renewcommand*\env@matrix[1][*\c@MaxMatrixCols c]{%
  \hskip -\arraycolsep
  \let\@ifnextchar\new@ifnextchar
  \array{#1}}
\title[Restricted Dyck Paths, Polyominoes, and Non-Crossing Partitions]
{Some Connections  Between Restricted Dyck Paths, Polyominoes, and Non-Crossing Partitions} 
\author{Rigoberto Fl\'orez}\thanks{}
\address{\noindent Department of Mathematical Sciences,  The Citadel,  Charleston, SC, U.S.A.}
\email{rigo.florez@citadel.edu}
\author{Jos\'e L. Ram\'{\i}rez}
\address{\noindent Departamento de Matem\'aticas, Universidad Nacional de Colombia, Bogot\'a,  COLOMBIA}
\email{jlramirezr@unal.edu.co}
\author{Fabio A. Velandia}
\address{\noindent Departamento de Matem\'aticas, Universidad Nacional de Colombia, Bogot\'a,  COLOMBIA}
\email{fvelandias@unal.edu.co}
\author{Diego Villamizar}
\address{Escuela de Ciencias Exactas e Ingenier\'ia\\Universidad Sergio Arboleda\\Bogot\'a, Colombia}
\email{diego.villamizarr@correo.usa.edu.co}
\date{\today}
\subjclass[2010]{05A15; Secondary 05B50.}
\keywords{Dyck path, polyomino, partition, bijection, generating function}
\begin{document}
\begin{abstract}
\noindent 
A \emph{Dyck path} is a lattice path in the first quadrant of the $xy$-plane that starts at the origin, ends on the $x$-axis, and consists of the same number of  
North-East steps  $U$ and South-East steps  $D$. A \emph{valley} is a subpath of the form $DU$.  A Dyck  path is called  \emph{restricted $d$-Dyck} 
if the difference between any two consecutive valleys is at least $d$  (right-hand side minus left-hand side) or if it has at most one valley. In this paper we give some 
connections between restricted $d$-Dyck paths and both, the non-crossing partitions of $[n]$ and some subfamilies of polyominoes. 
We also give generating functions to count several aspects of these combinatorial objects. Accepted for publication in Proceedings of the 52nd Southeastern International Conference on Combinatorics, Graph Theory, and Computing.
\end{abstract}

\maketitle

\section{Introduction}

A \emph{Dyck path} is a lattice path in the first quadrant of the $xy$-plane that starts at the origin, ends on the $x$-axis, and consists of (the same number of)  
North-East steps  $U=(1,1)$ and South-East steps  $D=(1,-1)$. A \emph{peak} is a subpath of the form $UD$, and a \emph{valley} is a subpath of the form $DU$. 
We define the {\em valley vertex} of $DU$ to be the lowest point (a local minimum) of $DU$. Following \cite{FloRamVelVil}, we define the vector  
$\nu=(\nu_1, \nu_2, \dots, \nu_k)$, called \emph{valley vertices},  formed by all $y$-coordinates (listed from left to right) of all valley vertices of a Dyck path.  
For a fixed $d \in \mathbb{Z}$, we introduce a new family of   lattice  paths called   \emph{restricted $d$-Dyck} or  \emph{$d$-Dyck}  
(for simplicity).  Namely, a Dyck path $P$ is a \emph{$d$-Dyck},  if either $P$   
has at most one valley, or  if its valley vertex vector $\nu$ satisfies that  $\nu_{i+1}-\nu_i\geq d$, where   $1\le i <k$. For example,  in Figure \ref{Example}  
we see that $\nu=(0,2,5,7)$, and that $\nu_{i+1}-\nu_i\ge 2$, for $ 1\le i< 4$. So, the figure depicts a $2$-Dyck path of length 28 (or semi-length 14).  Another 
well-known example, is the set of $0$-Dyck paths, known in literature as non-decreasing Dyck paths (see for example, \cite{barcucci,CZ,CZ2,ElizaldeFlorezRamirez,FlorezJose}). 
A second classic  example occurs when $d\to -\infty$, giving rise to Dyck paths. 
We say that a polyomino $P$  is \emph{directed} if for a given cell  $c\in P$ there is a path totally contained in $P$ joining $c$ with the bottom left-hand corner cell and using only 
north and east steps. We say that $P$ is \emph{column-convex} if every vertical path joining the bottom cell with the top cell in the same column is fully contained in $P$. 
A polyomino that is both directed and column-convex is denoted by dccp \cite{barcucci2}.

\begin{figure} [htbp]
\begin{center}
\includegraphics[scale=0.9]{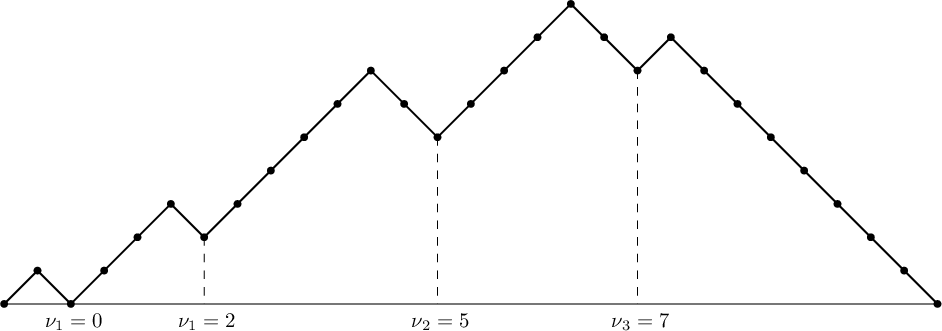}
\end{center}
\caption{A $2$-Dyck path of length 28.} \label{Example}
\end{figure}

Deutsch and Prodinger \cite{DeutscheProdinger}  give a bijection between the  set of non-decreasing Dyck paths of  length $2n$ and the set of directed column-convex 
polyominoes (dccp). For $d\ge 0$, we say that a dccp is $d$-\emph{restricted} if either it is formed by exactly one or two columns or if the difference between any two of its 
consecutive initial altitudes is at least $d$  (right-hand side minus left-hand side, but not including the first initial altitude). The left-hand side of Figure \ref{ipldefi} depicts a $2$-polyomino,  
where  the initial altitudes are $(0,0,2,4)$. In this paper we give a combinatorial expression and a generating function to count the number of $d$-restricted polyominoes 
of area $n$. When $d=0$, we obtain the result given by Deutsch and Prodinger.

\begin{figure}[h]
\centering
\includegraphics[scale=0.35]{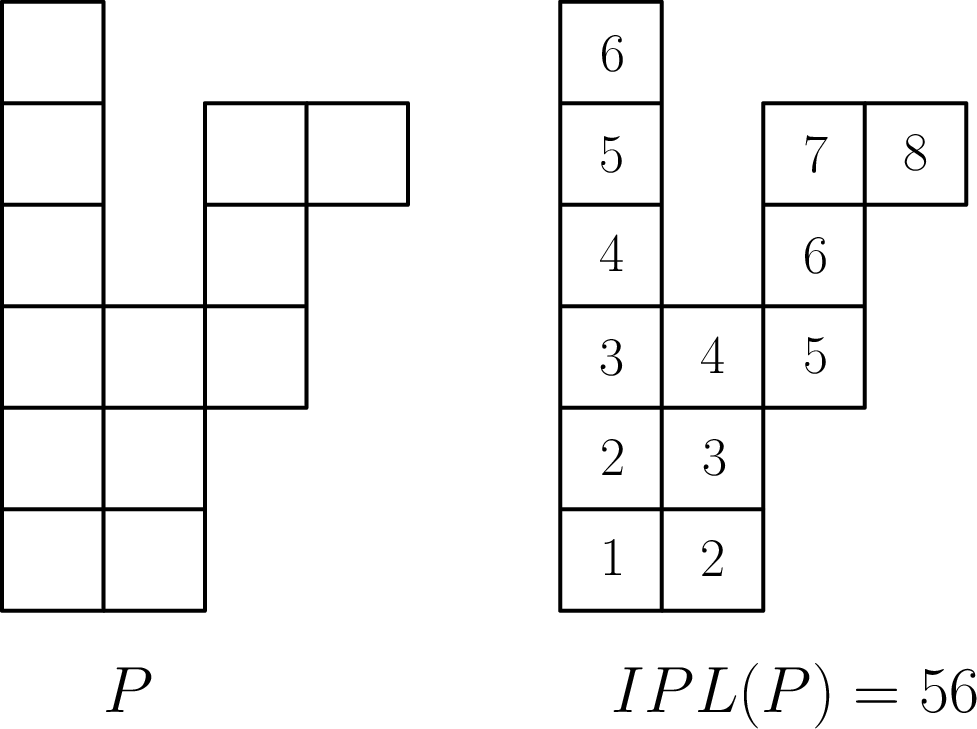}
\caption{A $2$-polyomino $P$ with TIPL equal to $56$.}
\label{ipldefi}
\end{figure}

The \textit{internal path length} of a cell in a dccp is the minimum number of steps needed to reach the cell, starting at the origin of the dccp and moving from one cell to 
any one of the two adjacent cells. The \textit{total internal path length} of dccp is the sum of the internal path length over the set of its cells. In this paper 
we give a generating function to count the total internal path length (TIPL) when the polyominoes are in the family of the $d$-restricted. 
The left-hand side of Figure \ref{ipldefi} shows the polyomino, while the right-hand side shows the internal path lengths of each cell, from with the total internal path length is seen to be $56$.

A fixed partition $P$ of $[n]$ is called \emph{non-crossing} if every edge of the form $\{n_1,n_2\}\subset [n]$ of its associated graph (defined in Section \ref{RestrictedNonCrossingPartitions}) connecting two distinct elements of $P$   
do not cross each other.  In this paper we extend this concept (see restricted $d$-partitions of $[n]$ on Page \pageref{dBell}) and give a connection between the $d$-Dyck  
paths and the non-crossing partitions of $[n]$.

\section{A Connection with the Polyominoes}\label{ConnectionPolyominoes}

The \emph{area} of a polyomino is the number of its cells.    
The right-hand side of Figure \ref{Example4} shows a dccp of area 12.  The entries  of the vector $A=(0, 0, 2, 5)$ represent the initial altitude (height) of each column of the polyomino 
and the entries of the vector $B=(2, 4, 7, 6)$  represent the final altitude (height) of each  column of the polyomino.

Deutsch and Prodinger \cite{DeutscheProdinger}  give a bijection between the  set of non-decreasing Dyck paths of   length $2n$ and the set of dccp of area $n$.    
The bijection from \cite{DeutscheProdinger} can be described as follows. Let $A=(0, a_2, \dots, a_k)$ and $B= (b_1, b_2, \dots, b_k)$ be vectors  
formed by the initial altitudes and final altitudes, from left to right, of the columns of a dccp. 
If $D$ is a non-decreasing Dyck path with valley vertices vector $V=(a_2, \dots, a_k)$ and peak vertices vector $P^{\prime}=(b_1, b_2, \dots, b_k)$ from left to right,   
then $A$ and $B$ are bijectively related with  $V$ and $P^{\prime}$, respectively.    
For example, the dccp in the right-hand side of Figure \ref{Example4} maps bijectively to the  Dyck path (on the left-hand side). Note that the valley vertices 
vector  and  the peak vertices vector of the path in Figure \ref{Example4} are  $V=(0, 2, 5)$ and $P^{\prime}=(2, 4, 7, 6)$, respectively. Clearly these two vectors, $V$ and $P^{\prime}$,
are bijectively related with $A=(0, 0, 2, 5)$ and $B=(2, 4, 7, 6)$.

\begin{figure} [htbp]
\centering
\includegraphics[scale=.65]{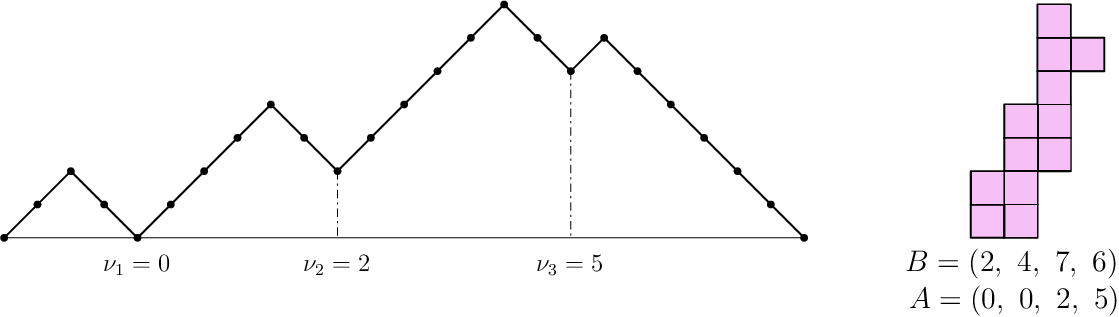}
\caption{A bijection between dccp polyomino and non-decreasing Dyck path.} \label{Example4}
\end{figure}

We say that a dccp is $d$-\emph{restricted} for $d\ge 0$, if either it is formed by exactly one or two columns or if its initial altitudes vector $A= (0, a_2, \dots, a_k)$ satisfies  that  
$a_{i+1}-a_i\geq d$  for all $2\leq i\leq k-1$. In Figure \ref{Example3} we show all $2$-restricted polyominoes of area 7, with exactly 3 columns.   

\begin{figure} [H]
\begin{center}
\includegraphics[scale=0.7]{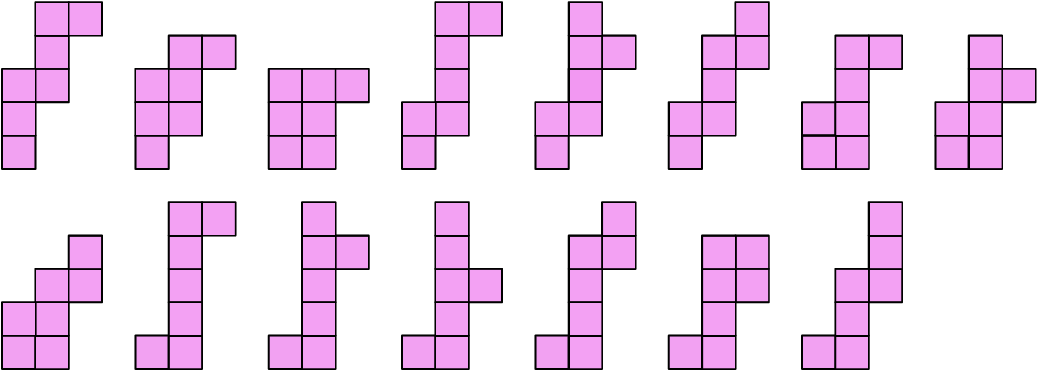}
\end{center}
\caption{All $2$-restricted polyominoes of area 7 with 3 columns. } \label{Example3}
\end{figure}

From \cite{FloRamVelVil} we know that  the number of $d$-Dyck paths of semi-length $n$ for  $d\geq 1$ is given by 
\begin{equation}  \label{FloRamVelVilrdn}
\sum_{k=0}^{\lfloor\frac{n+d-2}{d}\rfloor}\binom{n-(d-1)(k-1)}{2k}.
\end{equation}

From the bijection described in the second paragraph of this section we conclude that the set of $d$-Dyck paths of semi-length $n$ are bijectively related with the set of the $d$-polyominoes of area $n$. Therefore, we have this result. 
 
 \begin{proposition} The expression on \eqref{FloRamVelVilrdn} also counts the total number of $d$-restricted polyominoes  of area $n$.
 \end{proposition}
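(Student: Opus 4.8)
The plan is to show that the Deutsch--Prodinger bijection \cite{DeutscheProdinger}, recalled in the second paragraph of this section, restricts to a bijection between the set of $d$-Dyck paths of semi-length $n$ and the set of $d$-restricted polyominoes of area $n$; the proposition then follows at once, since by \eqref{FloRamVelVilrdn} the displayed expression counts the $d$-Dyck paths of semi-length $n$.

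First I would record the precise dictionary supplied by the bijection: a non-decreasing Dyck path $D$ of semi-length $n$ with valley vertices vector $\nu=(\nu_1,\dots,\nu_k)$ corresponds to a dccp of area $n$ whose initial altitudes vector is $A=(0,\nu_1,\dots,\nu_k)=(a_1,\dots,a_{k+1})$ (and whose final altitudes vector is obtained from the peak vertices vector of $D$). In particular a path with exactly $k$ valleys is sent to a polyomino with exactly $k+1$ columns, and under the identification $a_{i+1}=\nu_i$ one has $a_{i+2}-a_{i+1}=\nu_{i+1}-\nu_i$ for every $1\le i<k$.

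Next I would match the two defining restrictions through this dictionary. On the path side, $D$ is a $d$-Dyck path precisely when it has at most one valley ($k\le 1$) or when $\nu_{i+1}-\nu_i\ge d$ for all $1\le i<k$. The case $k\le 1$ corresponds exactly to the image polyomino having one or two columns, and the inequalities $\nu_{i+1}-\nu_i\ge d$ for $1\le i<k$ translate, via $j=i+1$, into $a_{j+1}-a_j\ge d$ for all $2\le j\le k=(k+1)-1$, which is the defining condition of a $d$-restricted polyomino with $k+1$ columns. Hence the bijection carries $d$-Dyck paths to $d$-restricted polyominoes and, running it backwards, every $d$-restricted polyomino arises from a (unique) $d$-Dyck path; since the map preserves semi-length/area, this yields the claimed bijection, and the count is inherited verbatim from \eqref{FloRamVelVilrdn}.

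The only point that requires attention is the bookkeeping of the off-by-one shifts: between the number of valleys and the number of columns, and between the two index ranges ($1\le i<k$ for valley vertices versus $2\le i\le k-1$ for initial altitudes), together with verifying that the two boundary cases (``at most one valley'' versus ``one or two columns'') match under this shift. Once these are checked, no further computation is needed.
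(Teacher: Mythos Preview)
Your proposal is correct and follows exactly the approach the paper takes: the paper simply asserts, in the sentence preceding the proposition, that the Deutsch--Prodinger bijection restricts to a bijection between $d$-Dyck paths of semi-length $n$ and $d$-restricted polyominoes of area $n$, and then invokes \eqref{FloRamVelVilrdn}. Your write-up supplies the index bookkeeping (the off-by-one shift between valleys and columns, and the matching of the boundary cases) that the paper leaves implicit, but the argument is the same.
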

 
 We use  $\mP _d(n)$ to denote the family of $d$-restricted polyominoes of length $n$ ($n$ columns). Associated to this concept we define these three sets.
 $$\mP_d^*(n)=\{P\in \mP_d(n): a_{i+1}-a_{i}\geq d, \text{ for all } i\geq 1 \},$$ 
$$\mP_d=\bigcup_{n\geq 0}\mP_d(n),\quad  \text{ and  } \quad \mP_d^*=\bigcup_{n\geq 0}\mP_d^*(n).$$

 Notice that the elements in $\mP_d^*(n)$ satisfy that the difference between the initial altitude of the second column and the first column is at least $d$.    
 
\begin{theorem} Let  $v_d(n)$ be the number of $d$-restricted polyominoes of area $n$. 
Then the generating function of the sequence $v_d(n)$ is given by
\begin{equation} \label{expression2RF}
V_d(x)=\sum_{n\geq 0}v_d(n)x^n=\frac{1-2x+2x^2-x^{d+1}}{(1-x)(1-2x+x^2-x^{d+1})}.
\end{equation}
\end{theorem}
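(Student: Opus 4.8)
The plan is to count $d$-restricted dccp directly, stratified by the number of columns, and then sum the resulting generating functions. First I would fix a convenient encoding: a dccp with $k\ge 1$ columns is determined by its initial-altitude vector $A=(0,a_2,\dots,a_k)$ and final-altitude vector $B=(b_1,\dots,b_k)$, where column $i$ contributes $b_i-a_i$ cells, so the area is $n=\sum_{i=1}^{k}(b_i-a_i)$ (e.g.\ $A=(0,0,2,5)$, $B=(2,4,7,6)$ has area $12$, as in the paper). Here the admissibility conditions are $b_i\ge a_i+1$ (each column is nonempty) together with $a_i\le a_{i+1}\le b_i-1$ for $1\le i\le k-1$, which is precisely the content of "directed" plus "column-convex" (the bottom cell of column $i+1$ must be reachable from column $i$, and the two columns must overlap). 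Writing $c_i=b_i-a_i\ge 1$ for the height of column $i$ and $t_i=a_{i+1}-a_i$ for the offset between columns $i$ and $i+1$, the admissibility conditions become $0\le t_i\le c_i-1$, the area is $\sum_i c_i$, and the $d$-restricted condition reads: no extra constraint if $k\le 2$, and $t_i\ge d$ for $2\le i\le k-1$ if $k\ge 3$ (note $t_1=a_2-a_1=a_2$ is never constrained, because $a_1=0$ is excluded from the condition).

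Next I would compute, for each fixed $k\ge 2$, the generating function $F_k(x)$ recording $x^{\mathrm{area}}$ over all $d$-restricted dccp with exactly $k$ columns. Since for $1\le i\le k-1$ the pair $(c_i,t_i)$ may be chosen independently (subject to $c_i\ge 1$ and $\ell_i\le t_i\le c_i-1$, with $\ell_1=0$ and $\ell_i=d$ for $i\ge 2$), and $c_k\ge 1$ is free, $F_k$ factors. The factor from column $k$ is $\sum_{c_k\ge 1}x^{c_k}=x/(1-x)$; the factor from the first pair is $\sum_{c_1\ge 1}\sum_{t_1=0}^{c_1-1}x^{c_1}=\sum_{c_1\ge 1}c_1x^{c_1}=x/(1-x)^2$; and the factor from each pair $(c_i,t_i)$ with $2\le i\le k-1$ is $\sum_{c_i\ge 1}\sum_{t_i=d}^{c_i-1}x^{c_i}=\sum_{m\ge 1}mx^{m+d}=x^{d+1}/(1-x)^2$. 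Hence
\[
F_k(x)=\frac{x}{(1-x)^2}\cdot\left(\frac{x^{d+1}}{(1-x)^2}\right)^{k-2}\cdot\frac{x}{1-x}=\frac{x^2}{(1-x)^3}\left(\frac{x^{d+1}}{(1-x)^2}\right)^{k-2},
\]
which is also correct for $k=2$ (empty product), while $F_1(x)=x/(1-x)$.

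Then I would sum. The geometric series over $k\ge 2$ gives
\[
\sum_{k\ge 2}F_k(x)=\frac{x^2}{(1-x)^3}\cdot\frac{1}{\,1-\dfrac{x^{d+1}}{(1-x)^2}\,}=\frac{x^2}{(1-x)\bigl((1-x)^2-x^{d+1}\bigr)},
\]
and adding the single-column term $F_1(x)=x/(1-x)$ together with the constant term $v_d(0)=1$ (the empty polyomino), and using $1+\frac{x}{1-x}=\frac{1}{1-x}$, yields
\[
V_d(x)=\frac{1}{1-x}+\frac{x^2}{(1-x)(1-2x+x^2-x^{d+1})}=\frac{1-2x+2x^2-x^{d+1}}{(1-x)(1-2x+x^2-x^{d+1})},
\]
since $(1-x)^2-x^{d+1}=1-2x+x^2-x^{d+1}$. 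This is the claimed identity.

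The step I expect to need the most care is the translation of the geometry into the encoding: checking that "directed and column-convex" is exactly equivalent to $a_i\le a_{i+1}\le b_i-1$, that the offset $t_1$ is genuinely exempt from the $d$-constraint, and that the $k=1$ and $k=2$ cases are absorbed correctly (the paper's area example $A=(0,0,2,5)$, $B=(2,4,7,6)$ is a useful consistency check); once that bookkeeping is pinned down, the remainder is a routine geometric-series manipulation. As a sanity check, setting $d=0$ collapses the formula to $V_0(x)=\dfrac{1-2x}{1-3x+x^2}$, the known generating function counting non-decreasing Dyck paths by semi-length, thereby recovering the Deutsch–Prodinger result as promised.
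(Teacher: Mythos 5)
Your proof is correct, and it reaches the formula by a route that differs in execution from the paper's. The paper works with the symbolic method: it introduces the auxiliary class $\mP_d^*$ (polyominoes whose \emph{first} altitude jump already satisfies the $\ge d$ constraint), derives the functional equations $V_d(x)=\frac{1}{1-x}+\frac{x}{(1-x)^2}V_d^*(x)$ and $V_d^*(x)=\frac{1}{1-x}+\frac{x^{d+1}}{(1-x)^2}V_d^*(x)$ from the ``glue a polyomino to a marked cell of the first column'' decomposition, and solves the linear system. You instead make the column encoding fully explicit --- heights $c_i\ge 1$ and offsets $t_i$ with $0\le t_i\le c_i-1$, the first offset exempt from the $d$-constraint --- and compute a closed product $F_k(x)=\frac{x^2}{(1-x)^3}\bigl(\frac{x^{d+1}}{(1-x)^2}\bigr)^{k-2}$ for each fixed number of columns, then sum a geometric series. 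The two are mathematically equivalent (your geometric series is exactly the solved form of the paper's recursion for $V_d^*$), and all the key bookkeeping agrees: the junction factor $x/(1-x)^2$ for the unconstrained first overlap, $x^{d+1}/(1-x)^2$ for each constrained one, and the vacuousness of the constraint for $k\le 2$. What your version buys is transparency: the admissibility condition $a_i\le a_{i+1}\le b_i-1$ and the independence of the pairs $(c_i,t_i)$ are stated and checked rather than read off a figure, and the refinement by number of columns comes for free. What the paper's version buys is reusability: the class $\mP_d^*$ and the marked-cell decomposition are used again verbatim in the proof of Theorem \ref{IPLTheo} on the total internal path length, where a purely explicit product over columns would be much more painful. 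Your $d=0$ sanity check against $\frac{1-2x}{1-3x+x^2}$ is the right consistency test and matches the Deutsch--Prodinger case.
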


\begin{proof}
For any $d$-restricted polyomino $P\in \mP_d$, then $P$ is either a (possibly empty) column  or can be obtained by ``gluing" a polyomino $P^*$ in $\mP^*_d$  to the right-hand side of column $C$. That is, the lowest level of $P^*$ must be at the same level of a chosen cell $c_i$ in $C$  as shown on the right-hand side of Figure  \ref{decomposition}. 
\begin{figure}[H]
\centering
\includegraphics[scale=1]{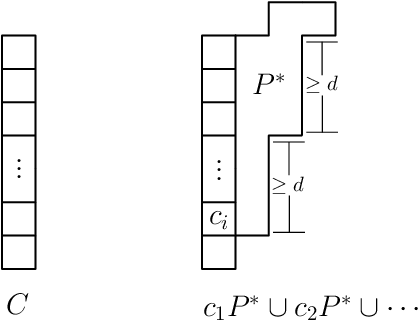}
\caption{Decomposition of a $d$-restricted polyomino.}
\label{decomposition}
\end{figure}
Let $V_d^*(x)$ be the generating function of the area of the polyominoes in $\mP^*_d$.  Let $C(x)$ be the generating function of the area of the nonempty columns (polyominoes with only one column). Notice that if the column has height $n$, then this case contributes to the generating function the term $x^n$. Thus, 
$$1+C(x)=1+\sum_{n\geq 1}x^n=1+\frac{x}{1-x}=\frac{1}{1-x}.$$  
So, $\partial(C(x))/\partial x =\frac{x}{(1-x)^2}$ is the area generating function of the nonempty single column polyominoes 
with a labeled cell.  
From the symbolic method (see \cite{flajolet}), we obtain the functional equation
\begin{align*}
V_d(x)=\underbrace{(1+C(x))}_{(1)}+ \underbrace{\dfrac{\partial(C(x))}{\partial x} V_d^*(x)}_{(2)}= \frac{1}{1-x} + \frac{x}{(1-x)^2}V^*_d(x).
\end{align*}
The expression \eqref{FloRamVelVilrdn} in the above equality  corresponds to the generating function for the area of the columns (possible empty). 
The expression \eqref{expression2RF}  is the generating function for the $d$-restricted polyominoes with at least two columns such that the polyomino 
$\mP^*$ that starts in column 2 is in $\mP^*_d$.  In order to find $V^*_d(x)$ we can apply a similar decomposition to the family $\mP^*_d$ as shown in  
Figure \ref{decomposition2}. Notice that any $P^{*}\in \mP^*_d$ with at least one column can be obtained by attaching a polyomino $P'\in \mP_d^*$ to any  
but the first $d$ cells of a column of area greater than $d$  (this is necessary to preserve the inequalities on the initial altitudes vector). Thus 
\begin{align*}
V_d^*(x)= \frac{1}{1-x} + x^d\cdot \frac{x}{(1-x)^2}V^*_d(x),
\end{align*}
this implies that 
\begin{equation*}
V^*_d(x)=\frac{1-x}{1-2x+x^2-x^{d+1}}.
\end{equation*}

Therefore, we obtain the desired result.
\end{proof}

\begin{figure}[H]
\centering
\includegraphics[scale=0.3]{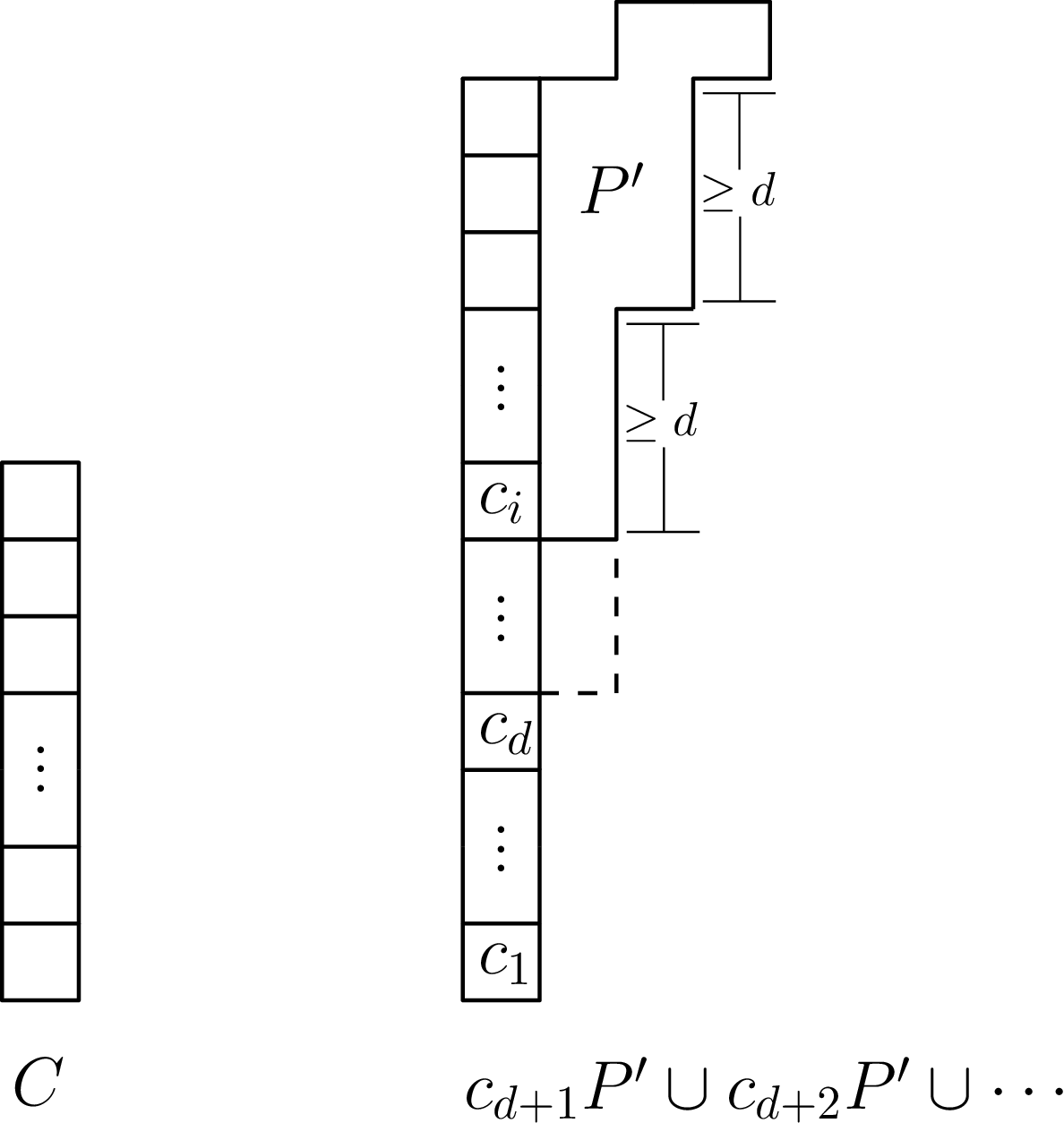}
\caption{Decomposition of the polyominoes in $\mP^*$.}
\label{decomposition2}
\end{figure}

\subsection{Total path length of the polyominoes}
Let $P$ be a dccp. The \textit{internal path length} (IPL) of a cell $c$ in $P$ is the minimum number of steps needed to reach 
$c$ starting at the origin (the bottom leftmost cell)  of $P$ and moving from one cell to any one of the two adjacent cells.  
The \textit{total internal path length} of $P$  (TIPL) is defined to be the sum of the IPL over the set of its cells. For example, 
Figure \ref{ipldefi} shows a $2$-restricted polyomino such that each cell is labeled  with the  minimum number of steps required 
to walk from the origin. So, the TIPL of this polyomino is 56 (it can be obtained  adding by up all of these labels). We give a generating   
function to count the TIPL. This result generalizes the result given by Barcucci et al. \cite{barcucci2} for the case $d=0$.

We use $t_d(n)$ to denote the total internal path length of all the $d$-polyominoes of area $n$. The following theorem gives a generating function for the sequence $t_d(n)$.
\begin{theorem}\label{IPLTheo}
For $d\geq 0$, we have the rational generating function
\begin{equation*}
T_d(x):=\sum_{n\geq 0}t_d(n)x^n=\frac{f_d(x)}{g_d(x)},
\end{equation*}
where $g_d(x)=2((x-1)^3-(x-1)x^{d+1})^3$ and 
\begin{multline*}
f_d(x)=-\left(d^2+7 d+18\right) x^{d+3}+2 \left(2 d^2+10 d+13\right) x^{d+4}-2 \left(3 d^2+9 d+11\right) x^{d+5}\\
+4 \left(d^2+d+2\right) x^{d+6}-\left(d^2-3 d-10\right) x^{2 d+4}+\left(2 d^2-4 d-6\right) x^{2 d+5}+6 x^{d+2}\\
-(d-1) d x^{d+7}-6 x^{2 d+3}-(d-1) d x^{2 d+6}+2 x^{3 d+4}-6 x^7+22 x^6-30 x^5+20 x^4-10 x^3+6 x^2-2x.
\end{multline*}

\end{theorem}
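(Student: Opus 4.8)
The plan is to reuse the two-level recursive decomposition of $d$-restricted polyominoes that yielded the generating function \eqref{expression2RF}, and to enrich the bookkeeping so that the total internal path length can be recovered after each ``gluing''. Recall from the proof of the previous theorem that every $P\in\mP_d$ is either a (possibly empty) single column, or is obtained by attaching a polyomino $P^{\ast}\in\mP_d^{\ast}$ to the right of a column $C$ so that the bottom row of $P^{\ast}$ sits at the level of a chosen cell $c_i$ of $C$ (Figure~\ref{decomposition}); similarly, every nonempty $P^{\ast}\in\mP_d^{\ast}$ comes from a column of area greater than $d$ with a copy of some $P'\in\mP_d^{\ast}$ attached above its bottom $d$ cells (Figure~\ref{decomposition2}). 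The new ingredient is to understand how the TIPL statistic behaves under these operations.

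The first step is the following ``shift'' principle. Since the attached piece $P^{\ast}$ is joined to $C$ only through the cell adjacent to $c_i$, and since directedness together with column-convexity forces a minimal walk from the origin into $P^{\ast}$ to enter through $c_i$ and the cell to its right, the internal path length in $P$ of a cell $c$ of $P^{\ast}$ equals its internal path length \emph{inside} $P^{\ast}$ plus the constant $\mathrm{IPL}_C(c_i)+1$. Hence, if $P^{\ast}$ has $m$ cells and total internal path length $t^{\ast}$, then
\[
\mathrm{TIPL}(P)=\mathrm{TIPL}(C)+t^{\ast}+m\bigl(\mathrm{IPL}_C(c_i)+1\bigr),
\]
and an identical identity governs the recursion inside $\mP_d^{\ast}$. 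This tells me precisely what to track: the area (variable $x$, already available), the number of cells, and --- for a column $C$ --- the sum of $\mathrm{IPL}_C(c_i)+1$ over the admissible attachment cells $c_i$, which for a single column is an elementary rational function of $x$ (for a column of height $n$ it equals $\binom{n+1}{2}$ when every cell is admissible, and a suitable truncation thereof for the restricted attachments of Figure~\ref{decomposition2}).

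Accordingly, alongside $V_d(x)$, $V_d^{\ast}(x)$, and the column series $C(x)=x/(1-x)$ introduced in that proof, I would introduce the cell-count refinements $N_d(x)$, $N_d^{\ast}(x)$ (area series of $\mP_d$, $\mP_d^{\ast}$ weighted by the number of cells, obtainable from the decompositions or as $x\,d/dx$ of the area series) and the path-length refinements $T_d(x)$, $T_d^{\ast}(x)$ (area series weighted by TIPL). Translating Figures~\ref{decomposition} and~\ref{decomposition2} through the shift identity then produces a linear system: $T_d$ is expressed in terms of $T_d^{\ast}$, $N_d^{\ast}$, and column sums such as $\sum_{n}\binom{n}{2}x^{n}$ and $\sum_{n}\binom{n+1}{2}x^{n}$, while $T_d^{\ast}$ satisfies a self-referential linear equation of the same shape involving $T_d^{\ast}$, $N_d^{\ast}$ and $V_d^{\ast}$ (the coefficient being essentially $x^{d}\cdot x/(1-x)^2$, as in the proof of the previous theorem). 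Solving the $\mP_d^{\ast}$-equation for $T_d^{\ast}$, substituting back, and clearing denominators yields $T_d(x)=f_d(x)/g_d(x)$ in the claimed form. As a consistency check, the denominator of $V_d(x)$ is $(1-x)\bigl(1-2x+x^{2}-x^{d+1}\bigr)$, so that $g_d(x)=2\bigl((x-1)^{3}-(x-1)x^{d+1}\bigr)^{3}$ is, up to sign and the factor $2$, the cube of that denominator --- the triple-convolution behaviour one expects when a statistic growing linearly in a cell's position is summed over all cells --- and setting $d=0$ must recover the result of Barcucci et al.~\cite{barcucci2}.

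The main obstacle I anticipate is twofold. Conceptually, one must prove the shift principle rigorously, i.e. verify that in a directed column-convex region no minimal walk into the attached block can gain by entering above the designated cell $c_i$; this uses the $d$-restriction to control the initial altitudes at the junction together with a short case analysis of minimal walks. Computationally, once the linear system is in place, extracting the exact polynomials $f_d$ and $g_d$ is a finite but heavy manipulation of rational functions in which $d$ is a symbolic parameter --- the exponents running from $x^{d+2}$ up to $x^{3d+4}$ in $f_d$ already reflect three nested occurrences of the $x^{d+1}$-type term --- so in practice I would organise the computation by those three nesting levels and confirm the final expression with a computer algebra system.
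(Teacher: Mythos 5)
Your proposal follows essentially the same route as the paper: the same two-level decomposition of Figures \ref{decomposition} and \ref{decomposition2}, with the TIPL tracked through a linear system in $T_d$, $T_d^*$, the cell-count series $x\frac{d}{dx}V_d^*(x)$, and weighted column sums, and your explicit shift identity $\mathrm{TIPL}(P)=\mathrm{TIPL}(C)+t^*+m\bigl(\mathrm{IPL}_C(c_i)+1\bigr)$ is exactly the fact the paper encodes implicitly in its three-part split of the contribution $Q_d(x)$. The plan is sound and matches the paper's argument; only the final symbolic elimination of $T_d^*$ remains to be carried out, which the paper likewise leaves to computation.
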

\begin{proof}
Let $T_d^*(x)$ be the generating function of  the TIPL of all $d$-restricted polyominoes of area $n$ in $\mP_d^*$. We use again the decomposition given in 
Figure \ref{decomposition}. Since the TIPL of a single column with $n$ cells is $n(n+1)/2$, we have 
\begin{align*}
T_d(x)=\sum_{n\geq 0} \frac{n(n+1)}{2}x^n + Q_d(x)= \frac{x}{(1-x)^3}+Q_d(x),
\end{align*}
where $Q_d(x)$ is the generating function of the TIPL of $d$-restricted polyominoes with at least two columns. According to the  decomposition given in 
Figure \ref{decomposition}, the TIPL contribution in $Q_d(t)$ can be divided into three parts:

\begin{enumerate}[\textbf{Part} (1).]

\item \emph{The TIPL contribution of the family $\mP_d^*$  to the right of the first column}. Given that the whole family of $d$-restricted  polyominoes in $\mP_d^*$ can 
be attached to a particular cell, the corresponding generating function is given by
\begin{equation} \label{FabionExp}
\left(\sum_{n\geq 1}nx^n \right)T_d^*(x)=\frac{x}{(1-x)^2}T_d^*(x).
\end{equation}
That is,  the expression  \eqref{FabionExp} is the product of the generating function of the cells in a column (without contributions to the TIPL) and the generating function of 
the TIPL in the family $\mP_d^*$.

\item  \emph{The TIPL contribution of the first column}. In this case, the TIPL of a column equals the contributions of the smaller columns ending at a cell having a $d$-restricted  
polyomino in $\mP_d^*$ is attached. In order to distinguish cells, the generating function $\mathcal{S}:=x\frac{d}{dx}\sum_{m\ge 0} {m+d+1 \choose 2}x^n$ must be considered. This TIPL contribution must  
be considered for each non-empty  polyomino  in $\mP_d^*$  that is attached. Thus, the generating function of the TIPL contributed by the first column is given by 

\begin{equation*}
\mathcal{S}\cdot(V_d^*(x)-1)=\mathcal{S}\cdot V_d^*(x)-\mathcal{S}.
\end{equation*}

\item  \emph{The TIPL contribution of the cells of $\mP_d^*$, relative to the origin on the first column}. Similarly, $x\frac{d}{dx}V_d(x)$ is the generating function of the cells in   
$\mP_d^*$ (the altitude of this cell is greater than or equal to $d$). However, the generating function associated to the TIPL contribution of every cell must be modified, not only to exclude attachments to the first $d$ cells but   
also to add the increase on $d$ units to the TIPL. The TIPL of the upper cells, with the origin placed on the $(d+1)$-th cell, is represented by 

\begin{equation*}
x^d\sum_{m\geq 0}\binom{m+1}{2}x^m=x^d\cdot \frac{x}{(1-x)^3}.
\end{equation*}
For each one of the upper cells, their IPL relative to the column of the origin, equals the previously described IPL increased by $d$. In terms of generating functions,   
the increase in $d$ units for every upper cell is represented by
\begin{equation*}
dx^d\sum_{m\geq 0}m x^m=dx^d\cdot \frac{x}{(1-x)^2}.
\end{equation*} 
Thus, the TIPL contribution of every upper cell is represented by the sum
\begin{equation*}
x^d\cdot \frac{x}{(1-x)^3}+dx^d\cdot \frac{x}{(1-x)^2}=x^d\left(\frac{x}{(1-x)^3}+d \frac{x}{(1-x)^2}\right).
\end{equation*} 
Therefore,  the generating function representing the TIPL contribution of the first column is
\begin{equation*}
x^d\left(\frac{x}{(1-x)^3}+d \frac{x}{(1-x)^2}\right)x\frac{d}{dx}V_d^*(x).
\end{equation*}
\end{enumerate}
From the previous analysis we have
\begin{align*}
T_d^*(x)&=\frac{x}{(1-x)^3}+Q_d^*(x) \\
&=\frac{x}{(1-x)^3}+\frac{x^{d+1}}{(1-x)^2}T_d^*(x) \\ 
&+x^d\left(\sum_{m\geq 0}m\binom{m+d+1}{2}x^m\right)V_d^*(x)-x^d\sum_{m\geq 0}m\binom{m+d+1}{2}x^m \\
&+x^d\left(\frac{x}{(1-x)^3}+d \frac{x}{(1-x)^2}\right)x\frac{d}{dx}V_d^*(x)\\
&=\frac{x}{(1-x)^3}+\frac{x^{d+1}}{(1-x)^2}T_d^*(x) \\ 
&+x^d(V_d^*(x)-1)\cdot x \frac{d}{dx}\left(\sum_{m\geq 0}m\binom{m+d+1}{2}x^m\right)\\
&+x^d\left(\frac{x}{(1-x)^3}+d \frac{x}{(1-x)^2}\right)x\frac{d}{dx}V_d^*(x).
\end{align*}
Solving for $T_d^*(x)$ and afterwards for $T_d(x)$, the result follows.

\end{proof}

In particular for $d=0$ we recover the following result for the directed column-convex 
polyominoes.

\begin{corollary}[\cite{barcucci2}, Theorem 4.1]
The generating functions for the TIPL of  the directed column-convex 
polyominoes is given by
$$T_0(x)=\frac{x \left(3 x^4-9 x^3+8 x^2-4 x+1\right)}{(1-x)\left(x^2-3 x+1\right)^3}.$$
\end{corollary}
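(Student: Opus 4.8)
The plan is to specialize the rational function $T_d(x)=f_d(x)/g_d(x)$ from Theorem~\ref{IPLTheo} at $d=0$ and simplify. First I would deal with the denominator. Setting $d=0$ turns $g_d(x)=2\bigl((x-1)^3-(x-1)x^{d+1}\bigr)^3$ into $g_0(x)=2\bigl((x-1)^3-(x-1)x\bigr)^3$, and pulling $(x-1)$ out of the inner bracket gives $(x-1)^3-(x-1)x=(x-1)\bigl((x-1)^2-x\bigr)=(x-1)(x^2-3x+1)$. Hence $g_0(x)=2(x-1)^3(x^2-3x+1)^3=-2(1-x)^3(x^2-3x+1)^3$, which already exhibits the cube $(x^2-3x+1)^3$ appearing in the target formula.

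Next I would collect the numerator. Substituting $d=0$ annihilates every monomial of $f_d(x)$ carrying a factor $d$ or $(d-1)d$, so only the constant parts of the remaining coefficients together with the seven ``generic'' terms $-6x^7+22x^6-30x^5+20x^4-10x^3+6x^2-2x$ survive; moreover the various exponent families $d+j$, $2d+j$, $3d+4$ all collapse onto small exponents when $d=0$. After this short bookkeeping step one obtains
\[
f_0(x)=-6x^7+30x^6-58x^5+58x^4-34x^3+12x^2-2x=-2x\bigl(3x^6-15x^5+29x^4-29x^3+17x^2-6x+1\bigr).
\]

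Finally I would cancel the common factors to display the answer in lowest terms. Dividing the degree-six factor by $1-x$ twice (polynomial long division, or simply expanding the product) verifies that
\[
3x^6-15x^5+29x^4-29x^3+17x^2-6x+1=(1-x)^2\bigl(3x^4-9x^3+8x^2-4x+1\bigr).
\]
Plugging this into $f_0(x)/g_0(x)$, the factor $-2$ cancels and two of the three factors $(1-x)$ in $g_0$ cancel against $(1-x)^2$, leaving
\[
T_0(x)=\frac{x\bigl(3x^4-9x^3+8x^2-4x+1\bigr)}{(1-x)(x^2-3x+1)^3},
\]
which is exactly Barcucci et al.'s formula. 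The whole argument is elementary; the only mildly delicate point is the bookkeeping required to merge the several exponent families of $f_d$ once they coincide at $d=0$, together with recognizing the factor $(1-x)^2$ shared by the numerator so that the rational function is presented in reduced form.
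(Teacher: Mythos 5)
Your computation is correct and is exactly what the paper intends: the corollary is obtained by substituting $d=0$ into $T_d(x)=f_d(x)/g_d(x)$ from Theorem \ref{IPLTheo}, collapsing the exponent families, and cancelling the common factor $-2(1-x)^2$ (the paper states the corollary without writing out this routine simplification). Your intermediate values $f_0(x)=-2x(1-x)^2(3x^4-9x^3+8x^2-4x+1)$ and $g_0(x)=-2(1-x)^3(x^2-3x+1)^3$ check out, so nothing further is needed.
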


For example, the series expansion of $T_2(x)$ is 
\begin{align*}
T_2(x)&=\frac{x \left(4 x^6-14 x^5+15 x^4-8 x^3+2 x^2-x+1\right)}{(x-1) \left(x^3-x^2+2
   x-1\right)^3}\\ 
&=x+6 x^2+23 x^3+65 x^4+\bm{165} x^5+401 x^6+932 x^7+2081 x^8+4516 x^9+\cdots .
\end{align*} 
Thus, the TIPL of all 2-restricted polyominoes of area $5$ is equal to $165$.  Figure \ref{iplex} shows both the IPL and the TIPL  of each 2-restricted polyomino  of area 5.
\begin{figure}[h]
\includegraphics[scale=0.6]{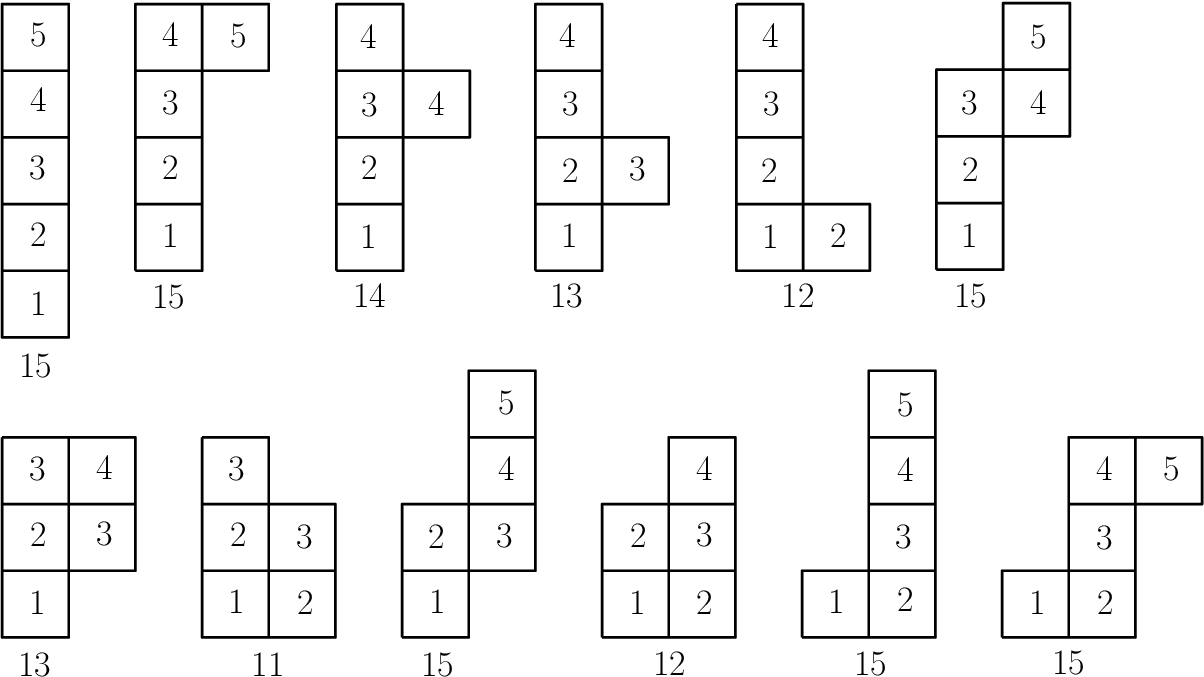}
\caption{The TIPL and IPL of each $2$-restricted polyomino of area 5.}
\label{iplex}
\end{figure}

\section{Restricted Non-Crossing Partitions} \label{RestrictedNonCrossingPartitions}

In this section, we describe a connection between the $d$-Dyck paths and the non-crossing partitions.  Before doing so, let us recall some   
terminology and make a few definitions.  A \emph{partition} of $[n]:=\{1, 2, \dots, n \}$ is a collection of mutually disjoint non-empty sets   
whose union is $[n]$. An element of the partition is called a \emph{block}.  The cardinality of the set of partitions of $[n]$ having exactly $k$   
blocks is given by  the Stirling number of the second kind ${n \brace k}$. The set of all partitions of $[n]$ is enumerated by 
$B(n)=\sum_{k=0}^n{n \brace k}$, the $n$-th Bell number. For  $n, k \geq 0$, we use $\Pi(n,k)$ to denote the set of all partitions of $[n]$ having $k$  
blocks, and use $\Pi(n)$ to denote $\cup_{k=0}^n\Pi(n,k)$.  For example, $B(3)=5$, with the corresponding partitions being
\begin{align*}
&\{\{ 1, 2, 3\}\},& &\{\{ 1, 2 \},  \{3\}\}, && \{\{ 1, 3 \},  \{ 2 \}\}, && \{\{ 1 \},  \{2,3 \}\},& &\{\{  1\},  \{2 \}, \{ 3\}\}.&
\end{align*}

Suppose that $\pi$ in $\Pi(n,k)$ is represented as $\pi=B_1/B_2/\cdots/B_k$, where $B_i$ is a block of $\pi$, for $1\leq i \leq k$.  
(Note that different blocks are separated by the symbol $/$.) The graph on the vertex set $[n]$ whose edge set consists of arcs connecting the elements of   
each block in numerical order is called the \emph{graph representation} of $\pi$. For example, in Figure \ref{grafo1} we depict the graph representation of the set partition
 $\pi=\{\{1, 4\}, \{2, 6, 7\}, \{3\}, \{5, 8\}\}\in \Pi(8,4)$.
 
\begin{figure}[h]
    \centering
    \includegraphics[scale=0.8]{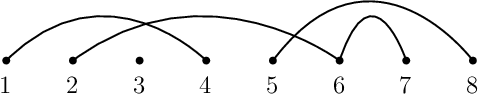}
    \caption{Graph representation of $\pi=\{\{1, 4\}, \{2, 6, 7\}, \{3\}, \{5, 8\}\}$.}
    \label{grafo1}
\end{figure}

A set partition is called \emph{non-crossing} if none of the edges on the graph representation cross ---in the graph representation. 
Let $\NC(n)$ denote the set of non-crossing set partitions of $[n]$. It is well-known that $|\NC(n)|=C_n$, where $C_n=\frac{1}{n+1}\binom{2n}{n}$ is the $n$th   
Catalan number.  Here we sketch the bijection between $\NC(n)$ and the Dyck paths of semi-length $n$.

Let $P$ be  a Dyck path of semi-length $n$. This path can be represented as a word over the alphabet $U$ and $D$. We use $U$ to denote a North-East step 
$(1,1)$ and use $D$  to denote the South-East step $(1,-1)$. Therefore, any Dyck path can be written as  
$U^{a_1}D^{b_1}\cdots U^{a_n}D^{b_n}$, where $a_i,b_i\geq 0$ (factor it in such a way that if $a_i=0$, then $b_j=a_j=0$ for $j>i$), 
$\sum_{j\ge 1}^{i} b_j\le \sum_{j\ge 1}^{i}  a_j$ for every $1\le i \le n$, and if $i=n$ the equality holds. Enumerate, starting with $1$, 
all $U$ steps. Notice that if we write the Dyck path $P$ as $P=P_1P_2\cdots P_{2n-1}P_{2n}$, with $P_i\in \{U,D\}$, then every 
$D$ step, say $P_j=D$, on $P$ has a corresponding $U$ step, say $P_{j'}=U$, such that $j'<j$ and either $j'=j-1$ or 
$\widetilde{P}=P_{j'+1}P_{j'+2}\cdots P_{j-1}$ is a Dyck path.
Now, for every $1\leq j\leq n$ take $b_j\ne0$ consecutive  $D$ steps, match them with their corresponding $U$ steps, and then form a 
block with the subscripts of the (corresponding) $U$ labels. At the end of this procedure we obtain a partition of $[n]$. 
It is known that this partition is non-crossing (see, for example, \cite{Zhao}).   
This process can be inverted and it is a bijection.  
We denote this bijection by $\Phi$.   For example, for the Dyck path $P$ in Figure \ref{NCfig2}, we have that $P=U^4D^2U^5DUD^7$. 
That is, $b_1=2, b_2=1$, and $b_3=7$.  The first two South-East steps, $D^2$, on the left-hand side correspond to the labels $3$ 
and $4$. The next South-East step corresponds, $D$,  to the label $9$, 
and finally the last seven South-East steps correspond,  $D^7$,  to the labels $1, 2, 5, 6, 7, 8$, and $10$. Therefore,
$\Phi(P)=\{\{1,2,5,6,7,8,10\}, \{3, 4\}, \{ 9\} \}$. 

\begin{figure}[H]
    \centering
    \includegraphics[scale=0.7]{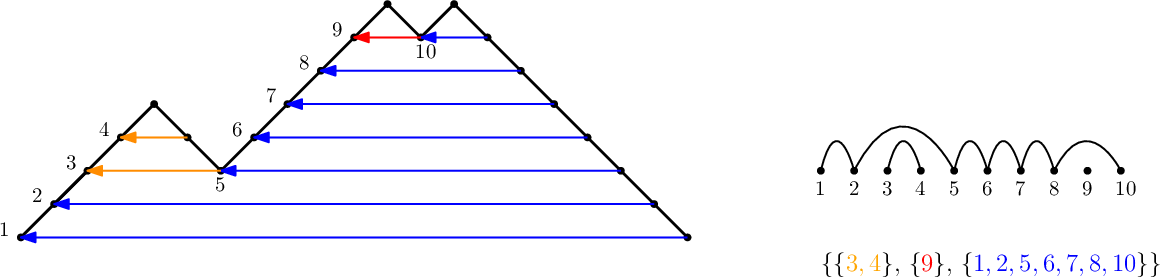}
    \caption{A bijection between a non-crossing partition and a Dyck path.}
    \label{NCfig2}
\end{figure}

Following the bijection $\Phi$, we can consider the following characterization for the family of $d$-Dyck paths in terms of partitions. 
We denote by $\NC_d(n)$ the set 
\begin{align}
\label{ncdeq}
   \{B_1/B_2/\cdots /B_k  \in \NC(n): |([n]\setminus [\texttt{a}_{i+1}])\cap B_i|\geq d,\,   \texttt{a}_{i+1}= \max (B_{i+1})  \text{ for }1<i<k \}. 
\end{align}
That is, a partition $\pi = B_1/B_2/\cdots /B_k$ belongs to $ \NC_d(n)$ if and only if for all $1<i<k $, there are at least $d$ elements in the block $B_i$ bigger than the maximum element in $B_{i+1}$.
 
The \emph{reverse} of a partition $\pi=B_1/B_2/\cdots/B_k$ of $[n]$,  is the partition $\pi^R=B_1^{\prime}/B_2^{\prime}/\cdots/B_k^{\prime}$, where $B_i^{\prime}=n+1-B_i:=\{n+1-\ell: \ell\in B_i\}$. It is clear that this operation is a bijection. 

We now construct a bijection to show that $|\NC_2(13)|=r_2(13)$. For this goal, we consider the $2$-Dyck path of semi-length $13$, given on left-hand side of   
Figure \ref{bij11},  $P=U^2D^2U^3DU^4DU^3DUD^8$; and the path $P^{\prime}$ obtained from $P$ ---interchanging the roles of  
 $U$ and $D$---, (see the right-hand side of Figure \ref{bij11}). That is, $P^{\prime}=D^2U^2D^3UD^4UD^3UDU^8$. The path $P^{\prime}$ can be seen 
 as a reflection of $P$ with respect to the $(x=2n)$-axis. The valleys vector of $P$ is  $\nu=(0, 2, 5, 7)$ and the valleys vector of $P^{\prime}$ is 
 $\nu^{\prime}=(7,5,2,0)$. Observe that the valleys vector of $P^{\prime}$ satisfy that  $\nu_{i+1}^{\prime}-\nu_{i}^{\prime}\leq-2$.  Applying the bijection 
 $\Phi$ to $P^{\prime}$, we obtain that 
$$\Phi(P^{\prime})=\{\{1, 2, 11\}, \{3, 4, 5, 10\}, \{6, 7, 9\}, \{8\}, \{12, 13\}\},$$
and
$$\Phi(P^{\prime})^R=\{\{1, 2\}, \{3, 12, 13\}, \{4, 9, 10, 11\}, \{5, 7, 8\},  \{6\}\}\in \NC_2(13).$$
A graph representation of $\Phi(P^{\prime})^R$ is depicted in Figure \ref{bij2}.
\begin{figure}[H]
    \centering
    \includegraphics[scale=0.45]{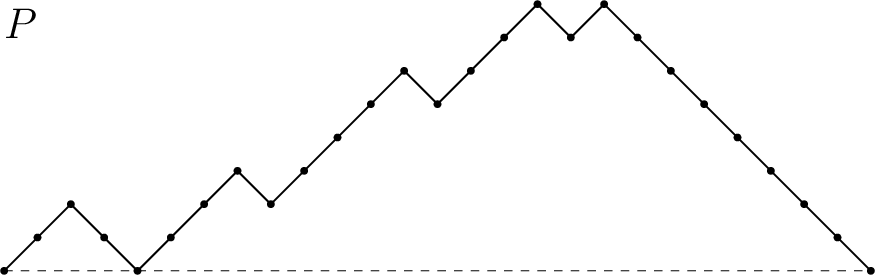} \hspace{0.4cm}
        \includegraphics[scale=0.5]{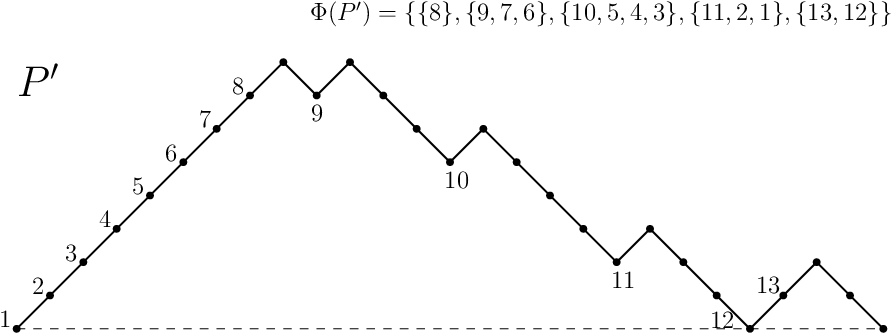}
    \caption{A Bijection between a non-crossing partitions and a Dyck path.}
    \label{bij11}
\end{figure}

\begin{figure}[H]
    \centering
        \includegraphics[scale=0.6]{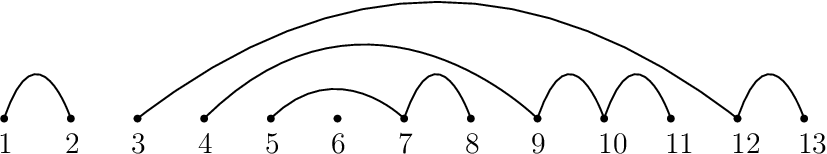}
    \caption{Graphical representation of $\Phi(P^{\prime})^R$.}
    \label{bij2}
\end{figure}

We now give a general statement of the previous example. Thus, 
this theorem gives a bijection between the set $\NC_d(n)$ and the set of $d$-Dyck paths of length $2n$. 

\begin{theorem}\label{dDyckPart}
If $d>0$ and $n\geq 0$, then the family of $d$-Dyck paths of length $2n$ and $\NC_d(n)$ are bijectively related. Furthermore,

$$|\NC_d(n)|=\sum_{k=0}^{\lfloor\frac{n+d-2}{d}\rfloor}\binom{n-(d-1)(k-1)}{2k}.$$
\end{theorem}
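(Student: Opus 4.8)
The plan is to promote the classical bijection $\Phi$ between Dyck paths of semi-length $n$ and $\NC(n)$ to one that respects the $d$-restriction on both sides, and then to read off the enumeration from \eqref{FloRamVelVilrdn}. Write $\mathcal D_n$ for the set of all Dyck paths of semi-length $n$. Following the example preceding the theorem, I would let $\rho\colon\mathcal D_n\to\mathcal D_n$ be the reflection of a path across the vertical line $x=n$ (equivalently: reverse the word and interchange $U$ and $D$), and let $R\colon\NC(n)\to\NC(n)$ be the reversal $\pi\mapsto\pi^R$. Both $\rho$ and $R$ are bijections, and $\Phi$ is a bijection of $\mathcal D_n$ onto $\NC(n)$, so $\Psi:=R\circ\Phi\circ\rho$ is a bijection $\mathcal D_n\to\NC(n)$. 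Since $\Psi$ is already a bijection between the full sets, it suffices to show that it restricts to a bijection from the $d$-Dyck paths of semi-length $n$ onto $\NC_d(n)$; equivalently, that for every $P\in\mathcal D_n$ one has: $P$ is a $d$-Dyck path if and only if $\Psi(P)\in\NC_d(n)$. The last assertion of the theorem is then immediate, since $|\NC_d(n)|$ equals the number of $d$-Dyck paths of semi-length $n$, which by \cite{FloRamVelVil} is the expression in \eqref{FloRamVelVilrdn}.

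To establish the equivalence, write $P=U^{a_1}D^{b_1}U^{a_2}D^{b_2}\cdots U^{a_m}D^{b_m}$ as a concatenation of maximal runs ($a_i,b_i\ge1$), so that $P$ has $m$ peaks and $m-1$ valleys, the $j$-th valley vertex sitting at height $\nu_j=\sum_{i=1}^{j}(a_i-b_i)$; set $\nu_0=\nu_m=0$. If $m\le2$ then $P$ is automatically a $d$-Dyck path and $\Psi(P)$ has at most two blocks, hence lies in $\NC_d(n)$ vacuously, so assume $m\ge3$. The heart of the argument is a three-step computation. First, $\rho(P)=U^{b_m}D^{a_m}U^{b_{m-1}}D^{a_{m-1}}\cdots U^{b_1}D^{a_1}$, and summing the run lengths shows that the height of $\rho(P)$ immediately after its $j$-th maximal down-run equals $\nu_{m-j}$. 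Second, run the matching defining $\Phi$ on $\rho(P)$ from left to right, maintaining the stack of unmatched up-steps: the $j$-th maximal down-run produces exactly one block $B'_j$, and one records $\max B'_j$ together with the number of elements of $B'_j$ lying strictly above the top of an appropriate neighbouring block. Third, apply $R$ and relabel the $m$ blocks of $\Psi(P)=B_1/\cdots/B_k$ (so $k=m$) in the canonical increasing-minimum order used in \eqref{ncdeq}.

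The goal of this computation is the single identity
\[
\bigl|\,([n]\setminus[\,\texttt{a}_{i+1}\,])\cap B_i\,\bigr|=\nu_i-\nu_{i-1}\qquad\text{for }1<i<k,\ \texttt{a}_{i+1}=\max(B_{i+1}).
\]
Granting it, the defining condition of a $d$-Dyck path, ``$\nu_{j+1}-\nu_j\ge d$ for $1\le j\le m-2$'', becomes after the substitution $j=i-1$ exactly the defining condition ``$\bigl|([n]\setminus[\texttt{a}_{i+1}])\cap B_i\bigr|\ge d$ for $1<i<k$'' of membership in $\NC_d(n)$. This proves the equivalence, hence that $\Psi$ restricts to the desired bijection, and with it the theorem.

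The step I expect to be the main obstacle is establishing the displayed identity, i.e., controlling the matching of $\Phi$ on $\rho(P)$ through its run structure. Two features make it delicate: a maximal down-run longer than the maximal up-run preceding it produces a non-contiguous block (it also consumes up-steps that were pushed earlier), and the canonical increasing-minimum ordering of the blocks of $\Psi(P)$ is in general a nontrivial permutation of their ``down-run order''. The simplification I would aim for is that, although the individual blocks can be intricate, the single scalar $\bigl|([n]\setminus[\texttt{a}_{i+1}])\cap B_i\bigr|$ --- how far above the top of the next block $B_i$ reaches --- is exactly the net change of $P$ between two consecutive valley vertices; this should follow either directly from bookkeeping the stack heights, which by the first step are governed by the $\nu_j$, or by an induction that strips off the rightmost maximal run of $P$ (equivalently the leftmost up-run of $\rho(P)$, equivalently the outermost arch of the non-crossing partition).
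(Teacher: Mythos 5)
Your proposal follows exactly the paper's route: compose the reflection $\rho$ (reverse the word and swap $U$ and $D$), the classical bijection $\Phi$, and the block-reversal $R$, and then check that the valley-height differences of $P$ translate into the cardinalities $\lvert([n]\setminus[\texttt{a}_{i+1}])\cap B_i\rvert$ appearing in the definition of $\NC_d(n)$. The key identity you isolate and flag as the ``main obstacle'' is precisely what the paper itself asserts without detailed verification (``each block has at least $d$ labels less than the label of the valley present in that block''), so your argument is essentially identical to the paper's, and in fact somewhat more explicit about exactly which claim carries the weight of the proof.
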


\begin{proof} Let $P$ be a $d$-Dyck path represented by a word over the alphabet $\{U,D\}$.  Let $P^{\prime}$ be the path traversed backwards,  
that means, exchange the $U'$s for $D's$ and vice versa and reverse the string. So the path $P^{\prime}$ is the reflection with respect to the  
$(x=2n)$-axis.  From this transformation, we can see that $P^{\prime}$ has the property that the valleys heights satisfy that $\nu_{i+1}-\nu_{i}\leq-d$.

We now apply the bijection $\Phi$ to $P^{\prime}$. Every valley in $P^{\prime}$ gives rise to a new block (containing its label) and each block has at least $d$ 
labels less than the label of the valley present in that block. Note that the first block of the partition does not follow this rule. This gives that the number of blocks 
in the partition is equal to the number of valleys plus one (the first block). Now applying the reverse to $\Phi(P^{\prime})$ we have that the condition of being  
\textit{smaller} becomes being \textit{larger}, that is $\Phi(P^{\prime})^R\in\NC_d(n)$. This gives  the characterization.
\end{proof}

The previous theorem gives rise to the question: what kind of interesting results do we obtain dropping the ``non-crossing condition"? With this question in   
mind we introduce the \emph{restricted $d$-Bell numbers}: let 
$$\Pi _d(n)=\{\pi =B_1/B_2/\cdots /B_k  \in \Pi(n):\text{for }1<i<k, |([n]\setminus [a])\cap B_i|\geq d\},$$
where $a=\max (B_{i+1})$ with $d\geq 0$. Notice how $\NC _d(n)\subseteq \Pi _d(n)$, so the extension with respect to $\NC _d(n)$, defined in \eqref{ncdeq}, is just considering all partitions in $\Pi (n)$ instead of just the non-crossing ones in $\NC(n)$. We use $B_d(n)$ to denote the cardinality of $\Pi _d(n)$; an element  $\pi \in \Pi _d(n)$ is called   
\emph{a restricted $d$-partition of} $[n]$. For example,  the partition $\pi=\{\{\{1\}, \{2, 10, 11\}, \{3, 8, 9\}, \{4, 6, 7\}, \{5\}\}\}$ is a restricted $2$-partition of the set  
$[11]$. Notice that for $d=0$ we recover the Bell numbers $B(n)$.

Theorem \ref{dBell} gives an answer to our question.  Thus,  this theorem gives a recurrence relation to calculate the number of restricted $d$-Bell numbers. 

\begin{theorem}\label{dBell}
The restricted $d$-Bell number $B_d(n)$ satisfies the recurrence relation
 $$B_d(n)=\sum _{k=0}^{n-1}\binom{n-1}{k}B_d(k-d),$$
 with $B_d(n)=1$ for $n\leq 1$. 
Furthermore, if $d\longrightarrow \infty$, then $B_d(n)=2^{n-1}$.
\end{theorem}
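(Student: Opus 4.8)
The plan is to establish the recurrence by a standard ``split off the block containing $n$'' argument, adapted to the restricted setting. Given $\pi = B_1/\cdots/B_k \in \Pi_d(n)$, the element $n$ lies in the last block $B_k$ (since blocks are ordered so that $\max B_1 > \max B_2 > \cdots > \max B_k$; this ordering convention is forced by the definition of $\Pi_d(n)$, and I would state it explicitly first). Write $B_k = \{n\} \cup S$ where $S \subseteq [n-1]$, say $|S| = n-1-k$ for some $0 \le k \le n-1$, i.e.\ $|B_k| = n-k$. Removing $B_k$ from $\pi$ leaves the blocks $B_1/\cdots/B_{k-1}$ on the ground set $[n-1]\setminus S$. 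The key point is the constraint linking $B_{k-1}$ to $B_k$: the definition requires $|([n]\setminus[\max B_k]) \cap B_{k-1}| \ge d$, and since $\max B_k = n$, this set is empty unless\,---\,wait, that would force $d \le 0$. So I need to be careful: the condition ranges over $1 < i < k$, so the pair $(B_{k-1}, B_k)$ is \emph{not} constrained; the constrained pairs are $(B_i, B_{i+1})$ for $i$ from $2$ to $k-1$. Re-reading, the effect is that when we strip $B_k$, the remaining partition $B_1/\cdots/B_{k-1}$ must itself be a restricted $d$-partition of its ground set, \emph{except} that we have gained the freedom to not impose the $(B_{k-2},B_{k-1})$ condition\,---\,no: that condition was $i = k-2 < k-1 = $ old second-to-last index, so it \emph{was} imposed. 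Hmm.

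Let me reconsider the mechanism. The correct reading: after removing $B_k$ and relabeling $[n-1]\setminus S$ order-isomorphically to $[n-1-|S|] = [k]$... actually $|[n-1]\setminus S| = (n-1)-(n-1-k) = k$. So $B_1/\cdots/B_{k-1}$ becomes a partition of a $k$-element set. It should be a restricted $d$-partition \emph{but shifted}: the constraint that used to read $|([n]\setminus[\max B_i]) \cap B_{i-1}| \ge d$ now must be re-examined, because deleting the $|S|$ elements of $S$ (all of which lie below positions interleaved with the other blocks) and deleting $n$ changes which elements count. The cleanest route is: the condition ``$B_{k-1}$ has at least $d$ elements exceeding $\max B_{k-1-1}$'' in the \emph{reduced} partition corresponds in the original to having $d$ elements of $B_{k-1}$ exceeding $\max B_{k-1}$ after we have inserted $|S|$ extra elements; equivalently the reduced partition lives in $\Pi_d(k)$ only after we account for $d$ ``phantom'' slots, which is exactly what produces the argument $B_d(k-d)$ rather than $B_d(k)$. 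I would make this precise by showing the map $\pi \mapsto (S, \bar\pi)$ where $\bar\pi$ is the order-reduction of $B_1/\cdots/B_{k-1}$ is a bijection from $\Pi_d(n)$ onto $\bigsqcup_{k=0}^{n-1} \binom{[n-1]}{n-1-k} \times \Pi_d(k-d)$, with the convention $\Pi_d(m) = \emptyset$ for $m < 0$ is \emph{wrong} too since $B_d$ is defined to be $1$ for $n \le 1$; so really $B_d(k-d)$ with $B_d(m) := 1$ for $m \le 1$ (including negative $m$, or at least $m=0,1$ and the sum only producing those via the binomial support). Choosing $S$ costs $\binom{n-1}{|S|} = \binom{n-1}{n-1-k} = \binom{n-1}{k}$, giving the stated $\sum_{k=0}^{n-1}\binom{n-1}{k} B_d(k-d)$.

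For the base cases: $\Pi_d(0)$ and $\Pi_d(1)$ each contain exactly one partition (the empty partition and $\{\{1\}\}$), so $B_d(n) = 1$ for $n \le 1$, matching the stated convention; and one checks the recurrence is consistent at small $n$ (e.g.\ $B_d(2) = \binom{1}{0}B_d(-d) + \binom{1}{1}B_d(1-d) = 1 + 1 = 2$ for $d \ge 1$, which is correct since both partitions of $[2]$ are vacuously restricted as $k \le 2$). For the limiting claim $B_d(n) = 2^{n-1}$ as $d \to \infty$: when $d$ exceeds $n$, the constraint $|([n]\setminus[a]) \cap B_i| \ge d$ for $1 < i < k$ can only be met if there are no such indices $i$, i.e.\ if $k \le 2$. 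Thus for $d$ large, $\Pi_d(n)$ consists exactly of the partitions of $[n]$ into at most two blocks, and the number of such is $1 + \frac{1}{2}(2^n - 2) = 2^{n-1}$ (one block, or choose a proper nonempty subset to be one block, dividing by $2$ for the unordered pair). Alternatively, and more in keeping with the recurrence, once $d \ge n$ every term $B_d(k-d)$ in the sum has $k - d \le n-1-d < 0 \le 1$, hence equals $1$, so $B_d(n) = \sum_{k=0}^{n-1}\binom{n-1}{k} = 2^{n-1}$; I would present this short induction as the proof of the second assertion.

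The main obstacle I anticipate is getting the bookkeeping of the shift exactly right\,---\,specifically verifying that stripping the last block and order-reducing yields precisely $\Pi_d(k-d)$ (with the correct phantom-slot accounting) and not $\Pi_d(k)$ or $\Pi_{d}$ on some awkwardly-defined ground set. This requires carefully tracking, for the former second-to-last block $B_{k-1}$, how many of its elements exceeded $\max B_{k-2}$ before versus after deletion, and arguing that the $d$ elements of $B_{k-1}$ that were required to exceed $\max B_k$ is \emph{not} a constraint (since $i=k-1$ is excluded from the range $1<i<k$)\,---\,so in fact the right statement may be that the reduced object is in $\Pi_d(k)$ but the ``$-d$'' arises from a different block. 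I would resolve this by working through the $n = 4$, $d = 2$ case by hand against the explicit recurrence value before committing to the bijection, then write the general argument in the language of the valley-vector / Dyck-path correspondence of Theorem~\ref{dDyckPart} if the direct partition argument proves too delicate.
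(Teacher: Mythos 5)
Your overall strategy is the same as the paper's: remove the block containing $n$, pay $\binom{n-1}{k}$ for choosing its remaining elements, and argue that what is left is counted by $B_d(k-d)$. Your treatment of the second assertion (the $d\to\infty$ limit via partitions into at most two blocks, or via the recurrence itself) is correct and in fact slightly more complete than the paper's. But the heart of the recurrence proof --- the exact mechanism that turns the residual $k$-element object into something counted by $B_d(k-d)$ rather than $B_d(k)$ --- is exactly the step you leave unresolved, and your attempts to locate it contain contradictions. You assert that the pair $(B_{k-1},B_k)$ is unconstrained ``since $i=k-1$ is excluded from the range $1<i<k$,'' but $i=k-1$ does satisfy $1<i<k$; the only unconstrained adjacent pair is $(B_1,B_2)$. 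You also place $n$ in the last block under a decreasing-maxima ordering, which is backwards, and in any case the definition (see the paper's example, where $11\in B_2$) orders blocks by minima, so $n$ need not sit at either end; this is precisely why the paper first passes to the reverse partition $\pi^R$, in which the block containing $n$ is $B_1'$ and the condition reads ``$B_i'$ has at least $d$ elements smaller than $\min B_{i+1}'$'' for $1<i<k$.

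The missing idea is the following. After deleting $B_1'=\{n\}\cup X$, the surviving blocks $B_2'/\cdots/B_k'$ on the $k$-element ground set $G$ satisfy the adjacency condition for $i=2,\dots,k-1$, whereas a reversed restricted $d$-partition of $G$ in its own right would only need it for $i=3,\dots,k-1$: there is exactly one \emph{extra} constraint, on $B_2'$. Since the condition forces $\min B_2'<\min B_3'<\cdots$, the elements of $G$ below $\min B_3'$ all lie in $B_2'$, so the extra constraint is equivalent to saying that the $d$ smallest elements $M_d$ of $G$ all lie in $B_2'$ (and lie below $\min B_3'$). Deleting $M_d$ therefore yields an arbitrary (reversed) restricted $d$-partition of the remaining $k-d$ elements, and the inverse map re-attaches $M_d$ to the first block --- this is the paper's ``attach $M_d$ to the smallest block'' step, and it is what produces the argument $k-d$. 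Without this identification your map $\pi\mapsto(S,\bar\pi)$ is not well defined as a bijection onto $\bigsqcup_k\binom{[n-1]}{k}\times\Pi_d(k-d)$, so as written the proposal is a correct plan with the decisive lemma still missing.
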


\begin{proof}
Let $\pi=B_1/\cdots /B_k$ be a partition in $\Pi _d(n)$ and let $\pi^R=B^{\prime}_k/B^{\prime}_{k-1}/\cdots /B^{\prime}_2/B^{\prime}_1$ 
be the reverse of $\pi$. It is easy to see that $n\in B^{\prime}_1$. From the condition on the partition $\pi^R$, we have that for $i>1$,   
the block  $B^{\prime}_i$ has at least $d$ elements smaller than the minimum element in $B^{\prime}_{i+1}$. We select a $(n-1-k)$-set   
$X \subseteq [n-1]$ satisfying  the condition that $\{n\}\cup X$ is equal to $B^{\prime}_1$. Let $M_d$ be the set of $d$ minimal elements of  
$[n-1]\setminus X$. We now create a restricted $d$-partition of  $[n-1]\setminus (X\cup M_d)$ and attach $M_d$ to the smallest block.  
This procedure gives the the desired recursion. 

Finally,  if $d>n$, then we cannot have three or more blocks in our partition. If there are more than two blocks, then we need an infinite number 
of elements to be placed in the middle partition. Since ${n \brace 1}=1, {n\brace 2}=2^{n-1}-1$, adding them we get $B_d(n)=2^{n-1}$.
\end{proof}

The numbers $B_d(n)$ agree with the sequence A210545  in~\cite{sloane} shifting $d$ units in the row. In Table \ref{tab2} we show the first few values of the sequence $B_d(n)$.
\begin{table}[htp]
\centering
\begin{tabular}{l||lllllllllll}
  $d\backslash n$ & $0$ & $1$ & $2$ & $3$ & $4$ & $5$ & $6$ & $7$ & $8$ & $9$ & $10$ \\ \hline\hline
   $d=0$ &  1 & 1 & 2 & 5 & 15 & 52 & 203 & 877 & 4140 & 21147 & 115975\\
  $d=1$& 1 & 1 & 2 & 4 & 9 & 23 & 65 & 199 & 654 & 2296 & 8569 \\
  $d=2$& 1 & 1 & 2 & 4 & 8 & 17 & 40 & 104 & 291 & 857 & 2634  \\
  $d=3$& 1 & 1 & 2 & 4 & 8 & 16 & 33 & 73 & 177 & 467 & 1309 \\
  $d=4$&1 & 1 & 2 & 4 & 8 & 16 & 32 & 65 & 138 & 315 & 782 \\
\end{tabular}\\[6pt]
\caption{Values of $B_d(n)$ for $d=0,1,2,3,4$.}\label{tab2}
\end{table}

\textbf{Acknowledgments}.  
The first author was partially supported by The Citadel Foundation.
The second author was partially supported by the Facultad de Ciencias at Universidad Nacional de Colombia,
Project No.  53490.

\end{document}